\newtheorem{theorem}{Theorem}[section]
\newtheorem*{theorem*}{Theorem}
\newtheorem{corollary}[theorem]{Corollary}
\newtheorem{definition}[theorem]{Definition}
\newtheorem{lemma}[theorem]{Lemma}
\newtheorem{proposition}[theorem]{Proposition}
\newtheorem{question}[theorem]{Question}
\newcommand{\AC}{\mathrm{AC}}
\newcommand{\bi}{\begin{itemize}}
\newcommand{\ei}{\end{itemize}}
\newcommand{\bc}{\begin{center}}
\newcommand{\ec}{\end{center}}
\newcommand{\D}{\mathcal{D}}
\newcommand{\R}{\mathbb{R}}
\newcommand{\ZF}{{\mathrm{ZF}}}
\newcommand{\TD}{{\mathrm{TD}}}
\newcommand{\AD}{{\mathrm{AD}}}
 \newcommand{\DCR}{{\mathrm{DC}_{\mathbb{R}}}}
\newcommand{\CCR}{{\mathrm{CC}_{\mathbb{R}}}}
\newcommand{\x}{{\mathbf{x}}}
\newcommand{\y}{{\mathbf{y}}}
\newcommand{\z}{{\mathbf{z}}}
\newcommand{\ux}{{u_{\mathbf{x}}}}
\begin{document}
\title{$\TD$ implies $\CCR$}
 \author{Yinhe Peng and Liang Yu }
 \thanks{Yu was partially supported by NSF of China No. 11671196 and 12025103. Both authors would like to thank Paul Larson for his suggestions and comments for the paper.}
 \address{Institute of Mathematics, Chinese Academy of Sciences\\ 
 East Zhong Guan Cun Road No. 55\\Beijing 100190\\China}
 \email{pengyinhe@amss.ac.cn}
\address{Department of Mathematics\ \\
Nanjing University, Jiangsu Province 210093\\
P. R. of China} \email{yuliang.nju@gmail.com.}
\subjclass[2010]{03D28, 03E15, 03E25, 03E60} 
 
\maketitle
\begin{abstract}
Assuming $\ZF$, we prove that Turing determinacy ($\TD$)  implies the countable choice axiom for sets of reals ($\CCR$). \end{abstract}
\section{Introduction} \label{sec:introduction}

Turing reduction $\leq_T$ is a partial order over reals. It naturally induces an equivalence relation $\equiv_T$. Given a real $x$, its corresponding  Turing degree $\x$ is a set of reals defined by $\{y\mid y\equiv_T x\}$. We say $\x\leq \mathbf{y}$ if $x\leq_T y$.  We use $\D$ to denote the set of Turing degrees. An {\em upper cone} $u_{\x}$ of Turing degrees is the set $\{\mathbf{y}\mid \mathbf{y}\geq \mathbf{x}  \}$.

\begin{definition}
 Turing determinacy,  or $\TD$, says that for any set $A$ of Turing degrees, either $A$ or $\D\setminus A$ contains an upper cone of Turing degrees.
\end{definition}

Martin proves the following famous theorem.
\begin{theorem}[Martin \cite{Martin68}] 
Over $\ZF$, the Axiom of determinacy, or $\AD$,  implies $\TD$.
\end{theorem}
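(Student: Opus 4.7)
The plan is to apply $\AD$ to a single auxiliary game whose payoff encodes the given set $A$ of Turing degrees, and to read off an upper cone from whoever wins. Given $A \subseteq \D$, I lift it to the Turing-invariant set $A^{\ast} = \{z \in \cs : \z \in A\}$ and consider the standard infinite game on $\{0,1\}$ in which Players I and II alternately produce bits of a real $z$, with Player I winning iff $z \in A^{\ast}$. By $\AD$, one of the two players has a winning strategy, which we may identify with a real.

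The key step is the following: if Player I has a winning strategy $\sigma$, then the upper cone above the Turing degree of $\sigma$ is contained in $A$. Given any $x$ with $x \geq_T \sigma$, have Player II play the bits of $x$ in order against $\sigma$; the resulting play $z = \sigma \ast x$ satisfies $z \leq_T x$, since $x$ computes $\sigma$ and therefore can simulate the alternation, and $x \leq_T z$, since $x$ is read off the Player II coordinates of $z$ by a fixed computable function. Hence $\z = \x$, and since $\sigma$ is winning we have $z \in A^{\ast}$, forcing $\x \in A$. A symmetric argument shows that if Player II has a winning strategy $\tau$, then the upper cone above the degree of $\tau$ lies inside $\D \setminus A$. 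In either case some upper cone is contained in $A$ or in $\D \setminus A$, which is exactly $\TD$.

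The main piece of bookkeeping, and essentially the only point requiring care, is the Turing equivalence $z \equiv_T x$ used above; it relies on the standard convention that a strategy is coded by a real of the same Turing degree as itself, and that each player's moves can be extracted from the play by a fixed recursive function independent of the strategy. Beyond this there is no genuine obstacle: the whole construction is carried out in $\ZF$, with no appeal to any form of choice, so the argument suffices for the framework of the present paper.
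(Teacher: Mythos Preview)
Your argument is correct and is precisely Martin's original proof: lift $A$ to a Turing-invariant set of reals, apply $\AD$ to the associated game, and observe that any opponent play $x \geq_T \sigma$ against a winning strategy $\sigma$ produces a run Turing-equivalent to $x$, forcing the cone above $\sigma$ into $A$ (or into $\D\setminus A$ in the Player~II case). The bookkeeping you flag---that $\sigma\ast x \equiv_T x$ when $x \geq_T \sigma$---is exactly the right point to check, and your justification is fine.

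There is, however, nothing to compare against here: the paper does not give its own proof of this theorem. It is stated in the introduction as background, attributed to Martin with a citation to \cite{Martin68}, and then used without further comment. So your write-up is not an alternative to the paper's argument; it is simply a (correct) reconstruction of the classical proof that the paper takes for granted.
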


\begin{definition}
\begin{itemize}
 \item Countable choice axiom for sets of reals,  or $\CCR$, says that for any countable sequence $\{A_n\}_{n\in \omega}$ of nonempty sets of reals, there is a function $f:\omega\to \mathbb{R}$ so that for every $n$, $f(n)\in A_n$.
 \item  Dependent choice axiom for sets of reals,  or $\DCR$, says that for any binary relation $R$ over reals so that $\forall x \exists y R(x,y)$, there is a function $f:\omega\to \mathbb{R}$ so that for every $n$, $R(f(n), f(n+1))$.
 \end{itemize}
\end{definition}

Though $\AD$ contradict the Axiom of choice, or $\AC$, Mycielski proves the following theorem.
\begin{theorem}[Mycielski \cite{MYC64}]
Over $\ZF$,  $\AD$ implies $\CCR$.
\end{theorem}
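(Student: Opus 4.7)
The plan is to run the classical Mycielski game argument. Given a countable sequence $\{A_n\}_{n \in \omega}$ of nonempty sets of reals, I would first introduce a two-player infinite game $G$ on $\omega$ as follows: Player I's first move is an integer $n$ (all further moves of I are declared irrelevant to the payoff), while Player II's moves $(y_0, y_1, y_2, \ldots)$ code a real $y \in \omega^\omega$; Player II wins a run if and only if $y \in A_n$.

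Next, I would apply $\AD$ to $G$. Player I cannot have a winning strategy, since whatever his first move $n$ is, the set $A_n$ is nonempty and Player II may simply play out any fixed element of $A_n$ to win. Hence Player II has a winning strategy $\tau$. From $\tau$ I would then define $f : \omega \to \mathbb{R}$ by letting $f(n)$ be the real obtained as Player II's response via $\tau$ when Player I plays $(n, 0, 0, 0, \ldots)$; since $\tau$ is winning, $f(n) \in A_n$ for every $n$, which is the required choice function.

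The essential point to justify --- and the only place where the argument is genuinely delicate --- is that no circular use of $\CCR$ is introduced in producing the countably many values $f(n)$. This is handled by the design of $G$: the index $n$ is part of Player I's opening move, so the single object $\tau$ uniformly encodes Player II's responses to all such openings, and thereby yields all countably many witnesses $f(n) \in A_n$ from one application of determinacy, without any further appeal to choice.
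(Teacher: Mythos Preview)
Your proof is correct and is precisely the classical Mycielski game argument. Note, however, that the paper does not actually supply a proof of this theorem: it is stated with attribution to Mycielski~\cite{MYC64} purely as background, and the paper's own work begins only with Theorem~\ref{main}. There is therefore nothing in the paper to compare your argument against.

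One minor cosmetic point: the paper identifies $\mathbb{R}$ with $2^\omega$, whereas your game naturally produces Player~II's output in $\omega^\omega$. This is harmless --- one can either fix a definable bijection $\omega^\omega \to 2^\omega$ in advance, or simply declare that Player~II's $k$-th bit is $y_k \bmod 2$ --- but it is worth a one-line remark if you write this up formally.
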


It is a long standing question whether $\AD$ implies $\DCR$. 

The following question has been also circulated among set theorists (for example, see \cite{Lar}).
\begin{question}\label{question: td ccr?}
Over $\ZF$, does $\TD$ imply $\CCR$?
\end{question}

 In this paper, we answer the question.
 
 \bigskip
 
 We assume that readers have some knowledge of   descriptive set theory and recursion theory. The major references are \cite{Jech03} and \cite{Ler83}.

\section{The main theorem}

Throughout the section, we assume $\ZF+\TD$. 

We identify $\mathbb{R}$ as $2^{\omega}$ or $\mathscr{P}(\mathbb{N})$, the power set of $\mathbb{N}$. So if $x\in \R$, then $x(n)$, the $n$-th bit of $x$, belongs to $\{0,1\}$.  The structure of the Turing degrees is an upper semi-lattice. I.e. for any reals $x$ and $y$, $x\oplus y=\{2n\mid  x(n)=1\wedge n\in \mathbb{N} \}\wedge \{2n+1\mid y(n)=1 \wedge n\in \mathbb{N}\}$ has the least Turing degree above both $\x$ and $\y$.

\bigskip

We prove the following theorem.
\begin{theorem}\label{main}
Over $\ZF$,  $\TD$ implies $\CCR$
\end{theorem}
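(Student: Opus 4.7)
The plan is to show that the Turing-invariant set $S = \bigcap_n C_n$, with $C_n = \{z \in 2^\omega : \exists y \in A_n,\, y \leq_T z\}$, contains an upper cone. Granted this, any $z$ in such a cone satisfies that $\{e : \Phi_e^z \in A_n\}$ is nonempty for every $n$; letting $e_n$ be its least element and $f(n) = \Phi_{e_n}^z$ yields a choice function. This least-index convention is what converts the existential information ``$z \in C_n$ for all $n$'' into a single $\omega$-sequence of reals without any further appeal to choice.

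Each $C_n$ is Turing-invariant, upward closed, and contains the cone above the degree of any $y \in A_n$, so each $C_n$ is ``large'' in the Martin-measure sense. Applying $\TD$ to $S$ gives two cases: either $S$ contains a cone (done), or $\D \setminus S$ does. The naive attempt to rule out the second case---choose a cone base $\bd_n$ of $C_n$ for each $n$ and form $\bigoplus_n \bd_n$---is precisely an instance of $\CCR$ and is therefore unavailable; the heart of the proof is a $\CCR$-free contradiction in the bad case.

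To that end, I would assume $\D \setminus S$ contains the cone above some $\mathbf{e}$ and study the Turing-invariant function $F(\mathbf{x}) = \min\{n : \mathbf{x} \notin C_n\}$ defined on this cone. Upward closedness of each $C_n$ makes $F$ monotone non-decreasing along $\leq_T$, and no level set $F^{-1}(n)$ can contain a cone (for any purported base $\bd$, joining with the degree of some $y \in A_n$ lands one in $C_n$). Thus the cone above $\mathbf{e}$ is partitioned into countably many level sets of $F$, each avoiding a cone by $\TD$, while $F$ itself is unbounded (by finite additivity, $\bigcap_{n\le N} C_n$ contains a cone for each $N$).

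The main obstacle---and the expected crux of the proof---is to convert this configuration into a contradiction using only $\TD$, by applying it once more to a carefully chosen Turing-invariant set that canonically encodes the sequence $(A_n)$ together with the functional $F$ (for instance, a diagonal set forcing $F$ to stabilize along some subcone, or a self-referential set of degrees $\mathbf{x}$ satisfying some condition relating $F(\mathbf{x})$ to reals computed from $\mathbf{x}$). The single real produced by this final application of $\TD$ is intended to play the role the join $\bigoplus_n \bd_n$ would play under $\CCR$; designing this encoding to be genuinely $\CCR$-free while combinatorially strong enough to bound the unbounded-monotone $F$ is where the main difficulty lies.
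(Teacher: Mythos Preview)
Your setup is correct and essentially recovers the paper's reformulation (Proposition~\ref{proposition: ccr and function}): your $F$ is the function $\x\mapsto i_{\x}$ appearing there, and you have correctly isolated the crux as deriving a contradiction from this unbounded monotone $F$ without $\CCR$. But you explicitly leave that crux open---your final paragraph is speculation about what a further application of $\TD$ \emph{might} look like---so what you have is a restatement of the difficulty rather than a proof.

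The paper's resolution is not a single further application of $\TD$ to a cleverly encoded set, as you conjecture. It builds a chain of lemmas whose culmination (Lemma~\ref{proposition: countable}) is that \emph{every} function $f:\D\to\R$ has countable range on some upper cone. The ingredients you are missing are: (i) a recursion-theoretic construction (Lemma~\ref{lemma: doubjump}, via relativized Sacks forcing) producing above each $\x$ a perfect family of minimal covers $\{\y_r\}_{r\in\R}$ whose double jumps realize every degree $\ge\x''$; (ii) from (i) and $\TD$, a weak fragment of $\CCR$ (Proposition~\ref{proposition: ucc}: countable unions of countable sets of reals are countable), yielding in particular that countable sets of degrees have upper bounds; (iii) a lemma (Lemma~\ref{prop: no a increasing}) that no $f:\D\to\R$ can satisfy $f(\y)\neq f(\Phi(\y))$ on a cone for any almost-increasing $\Phi$; and (iv) the countable-range lemma itself, obtained by combining (i)--(iii). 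With this in hand, the paper defines the set-valued invariant $f(\y)=\{n:\exists\z\ge\y\,(\z\in A_n\setminus A_{n+1})\}\in\mathscr{P}(\omega)$, enumerates its countably many values $\{a_i\}$ on a cone, and picks a splitting set $a$ with $a\cap a_i\neq\emptyset$ and $a_i\setminus a\neq\emptyset$ for all $i$ to force a contradiction. Your integer-valued $F$ is too coarse; the passage to a real-valued invariant together with the countable-range lemma is exactly the missing idea.
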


The following reformulation   of $\CCR$ is helpful to understand the idea behind the subsequent proofs.
\begin{proposition}\label{proposition: ccr and function}
$\CCR$ is equivalent to that for any function  $f:\D\to \R$, there is an upper cone on which $f$ is constant.
\end{proposition}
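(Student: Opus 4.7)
The plan is to prove each implication separately.

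For the forward direction ($\CCR$ implies the cone-constant property), the idea is to apply $\TD$ bit by bit. Given $f : \D \to \R$, for each $n \in \omega$ Turing determinacy applied to $\{\x : f(\x)(n) = 0\}$ shows that some fixed $i_n \in \{0,1\}$ is assumed by $f(\x)(n)$ on a whole upper cone. Let $E_n \subseteq \R$ be the (nonempty) set of reals $a$ for which $f(\x)(n)$ is constant for all $\x \geq \mathbf{a}$. Applying $\CCR$ to $\{E_n\}_{n \in \omega}$ yields a sequence $(a_n)_{n \in \omega}$, and the effective join $a = \bigoplus_n a_n$ is a single real whose cone lies inside every cone above $\mathbf{a}_n$; hence $f$ is constant on the upper cone above $\mathbf{a}$.

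For the reverse direction the challenge is greater, because $\CCR$ demands countably many simultaneous choices while the cone-constant hypothesis delivers only one constant value on one cone. Given a family $\{A_n\}_{n \in \omega}$ of nonempty sets of reals, the first step is to associate to each $A_n$ the $\equiv_T$-invariant upward-closed set $T_n = \{\x \in \D : (\exists y \in A_n)\, y \leq_T x\}$. Each $T_n$ is nonempty (any $y \in A_n$ already satisfies $\y \in T_n$), and $\TD$ forces $T_n$ itself to contain an upper cone, since joining any would-be base of a cone in $\D \setminus T_n$ with a fixed $y \in A_n$ produces a degree inside $T_n$. The crucial move is to bundle these into a single function $g : \D \to \R = 2^{\omega}$ by $g(\x)(n) = 1$ iff $\x \in T_n$; applying the cone-constant hypothesis to $g$ produces one base $\mathbf{c} \in \D$ on whose cone $g$ is identically $(1, 1, \ldots)$, each coordinate being forced to $1$ because each $T_n$ individually contains a cone.

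The final step is to fix a single representative $c \in \mathbf{c}$, which is permissible in $\ZF$ as existential instantiation on one nonempty set, and to define $H(n) = \Phi_e^c$, where $e$ is the least Turing-functional index with $\Phi_e^c \in A_n$. Membership of $\mathbf{c}$ in every $T_n$ guarantees such an $e$ always exists, and $H : \omega \to \R$ is then the required choice function.

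I expect the principal obstacle to be the reverse direction, and specifically the discovery of the right encoding of the countable family $\{A_n\}$ into a single function $\D \to \R$. The choice of $T_n$ is natural because ``there exists $y \in A_n$ with $y \leq_T x$'' is automatically $\equiv_T$-invariant in $\x$, and reading off the resulting characteristic sequence bitwise is exactly what lets the cone-constant hypothesis perform the merging of countably many cones that $\TD$ alone would only deliver one coordinate at a time.
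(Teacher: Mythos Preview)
Your proof is correct and follows essentially the same approach as the paper. The forward direction is identical; for the reverse direction the paper argues by contrapositive (defining $f(\x)(n)=0$ iff $n<i_\x$, where $i_\x$ is the least $n$ with no $A_n$-member below $\x$, and showing $f$ cannot be constant on a cone), whereas you argue directly with the bitwise-complementary function $g$ and then extract the choice function via the least-index trick --- a minor structural difference, not a genuinely different route.
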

\begin{proof}
Assume $\CCR$. For any $i$, there is a  $j_i\in \mathbb{N}$ so that $A_i=\{\x\mid f(\mathbf{x})(i)=j_i\}$ contains an upper cone.  By  $\CCR$, we can choose, for each $i\in\mathbb{N}$, $\mathbf{z}_i$ such that $u_{\mathbf{z}_i}\subset A_i$ and  some $\mathbf{z}$ above all $\mathbf{z}_i$. Then $f$ is constant on $u_{\z}$.

Now suppose that $\{A_n\}_{n\in \omega}$ is a sequence of nonempty sets of reals witnessing the failure of $\CCR$. For any degree $\mathbf{x}$, let $i_{\mathbf{x}}$ be the least $i$ so that there is no real in $A_i$ Turing below $\mathbf{x}$. By the assumption, $i_{\x}$ exists for any degree $\x $.  Define a function $f:\D\to \R$ as follows:

\begin{equation*}
f(\mathbf{x})(n)=\left\{
\begin{aligned}
0, & \  & n<i_{\mathbf{x}}; \\
1, & \  & otherwise. \\
\end{aligned}
\right.
\end{equation*}

By the assumption, $f$ is well defined for every degree.  But clearly $f$ cannot be constant on any upper cone, a contradiction.
\end{proof}

The following technical lemma, which can be proved with $\ZF$,  is folklore in recursion theory.

\begin{lemma}\label{lemma: doubjump}
For any degree $\x$, there is a family Turing degrees $\{\y_r\mid r\in \mathbb{R}\}$ satisfying the following property:
\begin{itemize}
\item[(1)] For any $r\in \R$, $\x<\y_r$; 
 \item[(2)] For any $r_0\neq r_1\in \R$ and $\z<\y_{r_0},\y_{r_1}$, we have that $\z\leq \x$;
\item[(3)] For any $\z\geq \x''$, the Turing double jump of $\x$, there is an infinite set $C_{\z}\subset \R$ so that $\y_r''=\z$ for any $r\in C_{\z}$.
\end{itemize}
\end{lemma}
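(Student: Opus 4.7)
The plan is to build $\{\y_r\}_{r\in\R}$ in two layers: first produce an $x$-computable perfect tree $T$ whose distinct branches form minimal pairs over $x$ (giving (1) and (2)), then use jump inversion inside $T$ to secure (3).

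For the first layer, I would carry out a relativized Spector--Sacks style forcing with $x$-computable perfect subtrees of $2^{<\omega}$. For a requirement indexed by a pair of distinct nodes $(\sigma_0,\sigma_1)$ and an index $e$, the construction searches inside the current perfect subtree for incompatible extensions $(\tau_0,\tau_1)$ of $(\sigma_0,\sigma_1)$ such that either $\Phi_e^{\tau_0}$ and $\Phi_e^{\tau_1}$ explicitly disagree on some input, or the whole subtree above either $\tau_0$ or $\tau_1$ forces $\Phi_e$ to be $x$-computable. Iterating this over all such triples produces an $x$-computable perfect tree $T$ such that any two distinct branches $y_0, y_1$ of $T$ satisfy: every $z \leq_T y_0, y_1$ is $\leq_T x$. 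Coding $x$ into, say, the even bits of every branch simultaneously guarantees $y \geq_T x$, whence $\x < \y$ for every branch $y$ of $T$.

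For the second layer, note that any $\z \geq \x''$ computes $T$. I would then run a relativized Shoenfield--Friedberg jump inversion \emph{inside} $T$: starting from the root, select extensions at successive splitting nodes of $T$ that preserve the $T$-structure while alternately deciding $\Pi_2^{x}$ and $\Sigma_2^{x}$ statements using $z$ as oracle, so that the resulting branch $y$ satisfies $y'' \equiv_T z$. At each splitting node of $T$ a genuine binary choice remains, so $2^{\aleph_0}$ many branches $y$ of $T$ with $\y''=\z$ exist; in particular the set $C_{\z}$ can be chosen infinite. Enumerating $[T]$ by $\R$ via an $x$-computable bijection then yields the desired family $\{\y_r\}_{r\in\R}$.

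The main obstacle is making the two layers compatible: the minimal-pair tree $T$ is fixed in advance, while the jump-inversion demands freedom along each branch. The resolution is that the minimal-pair requirements constrain only \emph{pairs} of branches, never individual branches, so any branch selected later inherits the minimal-pair property against every other branch of $T$. A secondary concern is carrying out the argument in $\ZF$ alone (given the paper's goal of not invoking $\CCR$), but both constructions are recursive enumerations of finite-stage decisions made from finite sets and therefore do not use $\CCR$.
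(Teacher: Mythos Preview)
Your two-layer plan diverges from the paper's single Sacks-forcing construction, and the first layer as you describe it contains a genuine error. You claim the minimal-pair tree $T$ is $x$-computable and then write ``$y\geq_T x$, whence $\x<\y$ for every branch $y$ of $T$.'' That inference is wrong: $y\geq_T x$ only yields $\x\le\y$. Worse, an $x$-computable perfect tree always has an $x$-computable branch (e.g.\ its leftmost path), and with $x$ coded into the even bits that branch has degree exactly $\x$, so (1) fails outright for such $r$. The Spector--Sacks style questions you need to settle (whether a splitting exists, whether a functional is total along a subtree) are $\Sigma^0_1(x)$ or $\Pi^0_2(x)$, so the tree one actually produces lives at the level of $x'$ or $x''$, not $x$; the paper accordingly takes $S\leq_T x''$.

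The paper also does \emph{not} separate the layers. It runs one Sacks forcing relative to $x$ to obtain a perfect $S\le_T x''$ whose branches are pairwise incomparable \emph{minimal covers} of $x$ (immediately giving (1) and (2)) and, crucially, satisfy $y''\equiv_T y\oplus x''$ for every $y\in[S]$. Clause (3) then falls out with no further ``jump inversion inside $T$'': given $\z\ge\x''$, one simply codes $z$ into the branching pattern of $S$ (possible since $S\le_T x''\le_T z$) to get infinitely many $y\in[S]$ with $y\oplus S\equiv_T z$, and the built-in relation $y''\equiv_T y\oplus x''$ finishes it. Your second layer tries to manufacture this jump control after the fact by forcing $\Sigma^0_2$/$\Pi^0_2$ facts about $y$ along a pre-fixed tree; that is not a standard Shoenfield--Friedberg argument and would itself require something like the $y''\equiv_T y\oplus x''$ property you have not secured. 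So the gap is real: fix the complexity of the tree, switch from minimal pairs to minimal covers (or otherwise force every branch strictly above $\x$), and build the double-jump control into the tree rather than postponing it.
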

\begin{proof}
Fix a real $x$. It is routine (see \cite{Ler83}) to prove, by a  recursive Sacks forcing relative to $x$, that there is  a perfect tree $S\subset 2^{<\omega}$ with $S\leq_T x''$ satisfying the following property:
\begin{itemize}
 \item[(a)] For any $ y\in [S]$, the collection of infinite paths through $S$, we have that $x<_T y$ and there is no real $z$ so that $x<_T z<_T y$; 
 \item[(b)] For any $y_0\neq y_1\in [S]$,  $y_0$ is Turing incomparable with $y_1$;
 \item[(c)] For any $ y\in [S]$, $y''\equiv_T y\oplus x''$.
\end{itemize}

Then for any reals $y_0\neq y_1\in [S]$, $z<_T y_0$ and $z<_T y_1$, we have that $x\leq_T z\oplus  x\leq_T   y_0$ and $x\leq_T z\oplus  x\leq_T   y_1$. Thus, by (a) and (b),  $x\equiv_T z\oplus x$ and so $z\leq_T x$. 

Also for any $z\geq_T x''$, it is clear that there is an infinite set $C_z\subset [S]$ so that for any $y\in C_z$, we have that  $y\oplus S\equiv_T z$.  Then for any $y\in C_z$,  $y''\equiv_T y\oplus x''\geq_T y\oplus S\equiv_T z$. Also $y''\equiv_T y\oplus x''\leq_T z$. So $y''\equiv_T z$.

Now let $f$ be a bijection between $\R$ and $[S]$ and set $\y_r$ to be the Turing degree of $f(r)$ for any $r\in \R$. Then the collection $\{\y_r\}_{r\in \R}$ is exactly  what we want.
\end{proof}

 Firstly, we prove the following weak form of $\CCR$.

 \begin{proposition}\label{proposition: ucc}
Suppose that $\{A_n\}_{n\in \omega}$ is a countable family of countable sets of reals, then $A=\bigcup_n A_n$ is countable.
 \end{proposition}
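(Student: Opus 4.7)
The plan is to reformulate countability as Turing-boundedness and then apply \TD. A set $B \subseteq \R$ is countable if and only if there is a real $f$ with $B \subseteq \{y : y \leq_T f\}$: an enumeration of $B$ serves as a Turing bound, and conversely $\{y : y \leq_T f\}$ is countable via the canonical enumeration $(\Phi_e^f)_{e \in \omega}$. So it suffices to show $\bigcap_n U_n \neq \emptyset$, where $U_n = \{\mathbf{x} \in \D : A_n \leq_T \mathbf{x}\}$. Each $U_n$ is upward closed and contains a cone, based at any real enumerating $A_n$.

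First I would apply \TD\ to the Turing-invariant set $\bigcap_n U_n$. If it contains a cone, we are done. Otherwise $\bigcup_n (\D \setminus U_n)$ contains a cone $u_{\mathbf{c}}$, and I define the degree-invariant function $\nu : u_{\mathbf{c}} \to \omega$ by $\nu(\mathbf{x}) = \min\{n : A_n \not\leq_T \mathbf{x}\}$. For each $k$, a finite join of bounds of $A_0, \ldots, A_k$ (a valid operation in $\ZF$, since only finitely many choices are involved) produces a subcone on which $\nu > k$; thus $\nu$ is unbounded on every cone inside $u_{\mathbf{c}}$, and by \TD\ applied to $\nu^{-1}(\{k\})$ for each $k$, no single level $\nu^{-1}(\{k\})$ contains a cone.

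Next I would invoke \Cref{lemma: doubjump} at some $\mathbf{x} \geq \mathbf{c}$, obtaining $\{\y_r\}_{r \in \R}$ satisfying $(1)$--$(3)$. Set $m^* = \nu(\mathbf{x})$ and choose $y \in A_{m^*}$ with $\mathbf{y} \not\leq \mathbf{x}$, which exists because $A_{m^*} \not\leq_T \mathbf{x}$. Using clause $(2)$ together with the minimal-cover construction in the proof of the lemma, the relation $\mathbf{y} \leq \y_r$ can hold for at most one $r \in \R$. Since $A_{m^*} \leq_T \y_r$ entails $\mathbf{y} \leq \y_r$, we obtain $|\{r : A_{m^*} \leq_T \y_r\}| \leq 1$, and hence $\nu(\y_r) = m^*$ for all but at most one $r \in \R$.

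The main obstacle I foresee is converting this almost-constancy of $\nu$ on $\{\y_r\}$ into a true contradiction with the unboundedness of $\nu$ on every cone inside $u_{\mathbf{c}}$. The naive route — intersecting $\{\y_r\}$ with a cone $u_{\mathbf{v}_{m^*}}$ on which $\nu > m^*$ — is consistent with our count, since clause $(2)$ independently ensures at most one $\y_r$ lies in such a cone. I expect the closing argument to use clause $(3)$ decisively: the map $r \mapsto \y_r''$ hits every $\z \geq \mathbf{x}''$ with infinite fibers, and combined with the identity $\y_r'' \equiv_T \y_r \oplus \mathbf{x}''$ from the proof of \Cref{lemma: doubjump}, one should be able to select $r$ so that $\y_r$ itself (not merely $\y_r''$) computes a bound of $A_{m^*}$, forcing $\nu(\y_r) > m^*$ and contradicting the almost-constancy. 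Making the choice of target $\z$ precise and extracting the bound at the level of $\y_r$ rather than its double jump is the technical core of the argument.
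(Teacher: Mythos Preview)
Your setup matches the paper's: your $\nu$ is exactly the paper's $n_{\mathbf{x}}$, and your observation that $\nu(\mathbf{y}_r)=m^*$ for all but at most one $r$ is a clean form of the paper's claim (clause~(2) alone suffices here; no appeal to minimal covers is needed). The gap you flag is genuine, but the closing idea you sketch points in the wrong direction. You propose to use clause~(3) together with $\mathbf{y}_r''\equiv_T \mathbf{y}_r\oplus\mathbf{x}''$ to arrange that $\mathbf{y}_r$ \emph{itself} bounds $A_{m^*}$, so that $\nu(\mathbf{y}_r)>m^*$ for more than one $r$, contradicting almost-constancy. This cannot work: clause~(3) only controls $\mathbf{y}_r''$, and there is no way to pull a bound of $A_{m^*}$ from $\mathbf{y}_r\oplus\mathbf{x}''$ down to $\mathbf{y}_r$. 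Indeed, you have already proved the sharpest possible statement about the values $\nu(\mathbf{y}_r)$, so nothing further can be squeezed out at that level.

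The paper's resolution is to compare $\nu(\mathbf{y}_r)$ with $\nu(\mathbf{y}_r'')$ rather than to contradict almost-constancy. Pick any $\mathbf{z}\geq\mathbf{x}''$ that also lies above a bound of $A_{m^*}$; by clause~(3) there are infinitely many $r$ with $\mathbf{y}_r''=\mathbf{z}$, hence $\nu(\mathbf{y}_r'')>m^*$. For all but one of these $r$ you already have $\nu(\mathbf{y}_r)=m^*$, so some degree $\mathbf{y}_r\geq\mathbf{x}$ satisfies $\nu(\mathbf{y}_r)<\nu(\mathbf{y}_r'')$. Since $\mathbf{x}\geq\mathbf{c}$ was arbitrary, the set $\{\mathbf{w}\in u_{\mathbf{c}}:\nu(\mathbf{w})<\nu(\mathbf{w}'')\}$ is cofinal, and by $\TD$ it contains a cone $u_{\mathbf{x}_0}$. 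Then $\nu(\mathbf{x}_0)<\nu(\mathbf{x}_0'')<\nu(\mathbf{x}_0^{(4)})<\cdots$, so $\nu(\mathbf{x}_0^{(\omega)})$ would have to exceed every natural number, which is impossible since $\mathbf{x}_0^{(\omega)}\in u_{\mathbf{c}}$. The missing idea, then, is that almost-constancy is not the thing to be contradicted but the thing to be \emph{combined} with clause~(3) to force $\nu$ strictly up under the double jump; $\TD$ and the $\omega$-jump then finish the argument.
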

 \begin{proof}
 
 Suppose not. Let $\{A_n\}_{n\in \omega}$ is a countable family of countable sets of reals so that $A=\bigcup_n A_n$ is not countable. Given a  real $x$, let $n_{\x}$ be the least number $n$ so that there is a real $z\in A_n$ that is not Turing below $x$. Then $n_{\x}$ is defined for every $\x$. Moreover, $n_{\x}\leq n_{\y}$ for any $\x\leq \y$.

 Now fix a degree $\x$. Let $\{\y_r\mid r\in \mathbb{R}\}$  be as in  Lemma \ref{lemma: doubjump}.  
Since $A_{n_{\x}}$ is countable, by (3) of the lemma, there is an uncountable set $Z\subseteq \R$ so that for any $r\in Z$, $n_{\y_r''}>n_{\x}$.  
 
 We claim that there must be some $r\in Z$ so that $n_{\y_r}=n_{\x}$ and hence $n_{\y_r}< n_{\y_r''}$. Otherwise, for any $r\in Z$, there is some $s\in A_{n_{\x}}$ so that $\bf{s}\leq \y_r$ but $\bf{s}\not\leq \bf{x}$.  Then by (1) and (2)   of the Lemma, $A_{n_{\x}}$ must be uncountable, which is  a contradiction to the assumption.
 
 Then by $\TD$, there is some $\x_0$ so that for any $\y\in u_{\x_0}$, $n_{\y}<n_{\y''}$.  Then $n_{(\x_0)^{(\omega)}}$ is undefined where $(\x_0)^{(\omega)}$ is the $\omega$-th Turing jump of $\x_0$. A contradiction.     
    \end{proof}

By Proposition \ref{proposition: ucc}, we have the following conclusion. 
\begin{corollary}\label{corollary: upper bound of countable turing degrees}
Every countable set of Turing degrees has an upper bound.
\end{corollary}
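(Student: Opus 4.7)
The plan is to invoke Proposition \ref{proposition: ucc} directly and then extract an upper bound by an effective join.

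First I would check that each individual Turing degree, regarded as a subset of $\R$, is countable in $\ZF$. Given any $x\in \mathbf{x}$, the map $(e,e')\mapsto \Phi_e^x$ restricted to those pairs with $\Phi_e^x$ total and $\Phi_{e'}^{\Phi_e^x}=x$ is a surjection from a subset of $\omega\times\omega$ onto $\mathbf{x}$. This is an existence statement provable in $\ZF$ for each individual degree, so if $\{\mathbf{x}_n\}_{n\in\omega}$ is a countable set of Turing degrees, it forms a countable family of countable sets of reals, and Proposition \ref{proposition: ucc} yields that $A=\bigcup_n \mathbf{x}_n$ is a countable subset of $\R$.

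Fixing any bijection $g:\omega\to A$ given by the countability of $A$, I would then form the real $z$ defined by $z(\langle i,k\rangle) = g(i)(k)$, where $\langle \cdot,\cdot\rangle$ is a computable pairing on $\omega$. Standard recursion theory gives $g(i)\leq_T z$ for every $i\in\omega$, so every real appearing in some $\mathbf{x}_n$ is Turing below $z$, and hence $\mathbf{x}_n\leq \mathbf{z}$ for every $n$. Thus $\mathbf{z}$ is the desired upper bound.

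The only delicate point is the $\ZF$-bookkeeping in the first step: we must not need to uniformly pick a representative for each $\mathbf{x}_n$, since such a uniform choice would itself be an instance of $\CCR$ applied to the family we are trying to bound. Fortunately, countability of a single Turing degree is provable in $\ZF$ without any choice, and Proposition \ref{proposition: ucc} requires only that each member of the family be countable, not that the witnessing enumerations be uniformly given; the subsequent join uses just the single enumeration $g$ of the union, so no further choice is invoked.
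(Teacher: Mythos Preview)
Your argument is correct and matches the paper's approach: the paper simply asserts that the corollary follows from Proposition~\ref{proposition: ucc} without spelling out any details, and what you have written is exactly the natural way to fill those details in. Your attention to the $\ZF$-bookkeeping---that countability of each individual degree is a $\ZF$ fact requiring only a single existential instantiation, so that Proposition~\ref{proposition: ucc} applies without needing a uniform choice of representatives---is precisely the point one has to check, and you have handled it correctly.
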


\begin{definition}
\begin{itemize}
\item A function $\Phi:\D\to \D$ is called {\em almost increasing}, if there is an upper cone $\ux$ so that for any $\mathbf{y}$ in $\ux$, $\y<\Phi(\mathbf{y})$. 

\item Fix a function $\Phi:\D\to \D$. We say that a function $f:\D \to \R$ is {\em almost injective corresponding to $\Phi$} if  there is an upper cone $\ux$ so that for any $\mathbf{y}\in \ux$, $f(\mathbf{y})\neq f(\Phi(\mathbf{y}))$. 
\end{itemize}
\end{definition}

For example, the Turing jump function $J:\x\mapsto \x'$ is almost increasing.

\begin{lemma}\label{ord lem}
Every function $F: \D\to Ord$ is non-decreasing  over an upper cone.
\end{lemma}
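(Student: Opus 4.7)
The plan is to pass from $F$ to the auxiliary ordinal-valued function $G:\D\to\mathrm{Ord}$ defined by $G(\y)=\min\{F(\z):\z\geq\y\}$, the minimum being taken in the ordinals. This $G$ is well-defined in $\ZF$: the image $\{F(\z):\z\geq\y\}$ is a set by Replacement, is nonempty (it contains $F(\y)$), and every nonempty set of ordinals has a least element. Moreover $G$ is automatically non-decreasing on all of $\D$, since $\y\leq\z$ implies $u_{\z}\subseteq u_{\y}$, so the set of $F$-values above $\z$ is contained in the one above $\y$ and its minimum can only grow. It therefore suffices to show that $F=G$ on some upper cone.

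I then plan to apply $\TD$ to the set $A=\{\y\in\D:F(\y)=G(\y)\}$. If $A$ contains an upper cone $u_{\x_0}$, then for $\y\leq\z$ both in $u_{\x_0}$ we get $F(\y)=G(\y)\leq G(\z)=F(\z)$, which is what we want. If instead $\D\setminus A$ contains a cone $u_{\x_0}$, I will derive a contradiction. Set
\[ \alpha_0=\min\{F(\y):\y\in u_{\x_0}\}, \]
which exists as a least ordinal in a nonempty set of ordinals, and pick one $\y_0\in u_{\x_0}$ with $F(\y_0)=\alpha_0$. Since $\y_0\notin A$ we have $F(\y_0)>G(\y_0)$, so there is some $\z\geq\y_0$ with $F(\z)<F(\y_0)=\alpha_0$; but $\z\in u_{\x_0}$, contradicting the minimality of $\alpha_0$.

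The main delicacy is to avoid any hidden use of $\CCR$, since that is precisely what the paper is out to prove. In the second case above one might naively try to pick, for each $\y$ in the bad cone, a witness $\z_{\y}\geq\y$ with $F(\z_{\y})<F(\y)$ and iterate to form a descending sequence of ordinals; such a selection over a set of reals is a genuine use of choice and is not available. The role of $\alpha_0$ is precisely to sidestep this: only one selection from one nonempty set is required, which $\ZF$ permits. Beyond this, the argument rests only on well-foundedness of the ordinals, Replacement, and a single application of $\TD$, so I expect no further substantive obstacles.
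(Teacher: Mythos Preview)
Your proof is correct and is essentially the same argument as the paper's. Your set $A=\{\y:F(\y)=G(\y)\}$ coincides with the paper's $L=\{\x:\forall\y\geq\x\,(F(\x)\leq F(\y))\}$, and your contradiction in the second case---picking $\y_0$ minimizing $F$ on $u_{\x_0}$ and observing $\y_0\in A$---is exactly the paper's direct verification that $L$ is cofinal; the paper just phrases it as ``$L$ is cofinal, hence by $\TD$ contains a cone'' rather than splitting into two $\TD$-cases.
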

\begin{proof}
It suffices to prove that
$$L=\{\mathbf{x}\mid \forall \y\geq \x (F(\x)\leq F(\y)\}$$ 
is cofinal and hence contains an upper cone. To see this, just note that for any $\x$, there is a $\y\geq \x$ such that 
$$F(\y)=\min \{F(\z): \z\geq \x\}.$$
It is clear that $\y\in L$.
\end{proof}

\begin{lemma}\label{prop: no a increasing}
For any almost increasing function $\Phi$, there is no  almost injective function corresponding to it.
\end{lemma}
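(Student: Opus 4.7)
The plan is a proof by contradiction. Suppose an $f\colon \D \to \R$ is almost injective corresponding to some almost increasing $\Phi$, and fix an upper cone $u_{\x_0}$ on which both $\y < \Phi(\y)$ and $f(\y) \neq f(\Phi(\y))$ hold. I would first define $N\colon \D \to \omega$ by letting $N(\y)$, for $\y \in u_{\x_0}$, be the least $n$ with $f(\y)(n) \neq f(\Phi(\y))(n)$, and $0$ otherwise. Viewing $\omega$ as a subclass of the ordinals, Lemma \ref{ord lem} applied to $N$ produces a sub-cone $u_{\x_1} \subseteq u_{\x_0}$ on which $N$ is non-decreasing.

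Next I would apply $\TD$ to the set $\{\y : f(\y)(N(\y)) = 0\}$ to obtain a cone $u_\w \subseteq u_{\x_1}$ on which $f(\y)(N(\y))$ equals a fixed bit $b \in \{0,1\}$. The key observation is then: for $\y \in u_\w$, the definition of $N$ forces $f(\Phi(\y))(N(\y)) = 1-b$, while since $\Phi(\y) \in u_\w$ (as $\Phi(\y) \geq \y \geq \w$ on $u_{\x_0}$) the same conclusion applied at $\Phi(\y)$ gives $f(\Phi(\y))(N(\Phi(\y))) = b$. Comparing bits forces $N(\y) \neq N(\Phi(\y))$, and the non-decreasing property upgrades this to $N(\y) < N(\Phi(\y))$.

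Iterating along the $\Phi$-orbit of a fixed $\y \in u_\w$ therefore gives $N(\Phi^k(\y)) \geq N(\y) + k$ for every $k \in \omega$. Invoking Corollary \ref{corollary: upper bound of countable turing degrees} on the countable set $\{\Phi^k(\y) : k \in \omega\}$ yields an upper bound $\y^*$, which we may take inside $u_{\x_1}$; non-decreasingness then forces $N(\y^*) \geq k$ for every $k$, contradicting $N(\y^*) \in \omega$. The main trick is noticing that pinning $f(\y)(N(\y))$ to a fixed bit via $\TD$ converts almost-injectivity into strict monotonicity of $N$ along $\Phi$; once this is arranged, Corollary \ref{corollary: upper bound of countable turing degrees} performs the $\omega$-limit step without any appeal to $\CCR$.
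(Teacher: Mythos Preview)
Your proof is correct and follows essentially the same route as the paper: your $N(\y)$ is exactly the paper's $l(\y,\Phi(\y))$, and the three steps (apply Lemma~\ref{ord lem} to make $N$ non-decreasing on a cone, use $\TD$ to pin the bit $f(\y)(N(\y))$ and thereby force $N(\y)<N(\Phi(\y))$, then invoke Corollary~\ref{corollary: upper bound of countable turing degrees} to bound the $\Phi$-orbit and obtain a contradiction) match the paper's argument line for line. The only cosmetic difference is that the paper phrases the final contradiction as ``$l(\z,\Phi(\z))$ is undefined,'' whereas you spell out explicitly that $N(\y^*)\geq k$ for all $k$.
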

\begin{proof}
Suppose not.  Fix an almost increasing function $\Phi:\D \to \D$ witnessed by an upper cone $\ux$ and an  almost injective function $f$ corresponding to it. For $\y,\z \in \ux$, define $$l(\mathbf{y},\mathbf{z})=\min\{l\mid f(\mathbf{y})(l)\neq f(\mathbf{z})(l)\}.$$ Note that, by the assumption, $l(\mathbf{y},\mathbf{\Phi(\y)})$ is defined for every $\y\in \ux$. 

By Lemma \ref{ord lem} $$L_1=\{\mathbf{y}\mid \forall \z\geq \y (l(\mathbf{y},\Phi(\y))\leq l(\z,\Phi(\z)))\}$$ contains an upper cone $u_{\x_1}$ for some $\x_1\geq\x$.

Also for $i\in \{0,1\}$, let $$L_2^i=\{\mathbf{y}\mid f(\mathbf{y})(l(\mathbf{y},\Phi(\y))=i)\}.$$ Then for some $i$,   $L_2^i$ contains an upper cone $\mathbf{x}_2\geq \mathbf{x}_1$. Then for any $\mathbf{y}\geq \mathbf{x}_2$, $ f(\mathbf{y})(l(\mathbf{y},\Phi(\y)))=i= f(\Phi(\y))(l(\Phi(\y),\Phi(\Phi(\y))))$. So by the definition of $l$,  $$ l(\mathbf{y},\Phi(\y))<l(\Phi(\y),\Phi(\Phi(\y))).$$

By Corollary \ref{corollary: upper bound of countable turing degrees}, there is a degree $\z$ which is above $\Phi^{(n)}(\y)$ for every $n$. Then $l(\z,\Phi(\z))$ is undefined since $\z>\x_1$.
\end{proof}

\begin{lemma}\label{lemma: down}
Suppose that $f:\D\to \R$ is a function so that  $f(\mathbf{y})\leq \mathbf{y}$\footnote{Here we identify a real $y$ with its degree $\y$.} over an upper cone, then the range of $f$ is at most countable over an upper cone.
\end{lemma}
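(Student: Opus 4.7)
The plan is to combine Lemma~\ref{prop: no a increasing} with Lemma~\ref{lemma: doubjump}: first reduce to a cone on which $f$ is invariant under the jump, then use property~(3) of the double-jump lemma to collapse many $\y_r$'s onto a single $f$-value, and finally use property~(2) to pin that value below a fixed degree whose predecessors form a countable set of reals.

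First I would locate an upper cone $u_{\x^*}$ on which both $f(\y)\le\y$ and $f(\y)=f(\y')$ hold. The jump map $J(\y)=\y'$ is almost increasing, so by Lemma~\ref{prop: no a increasing} $f$ is not almost injective corresponding to $J$; hence applying $\TD$ to the set $\{\y : f(\y)\neq f(\y')\}$ forces its complement to contain some cone $u_{\x_1}$. Choose $\x^*$ above both $\x_1$ and the base of the cone on which $f(\y)\le\y$. Since $\y\ge\x^*$ implies $\y'\ge\x^*$, the identity $f(\y)=f(\y')$ iterates on $u_{\x^*}$ to $f(\y)=f(\y^{(n)})$ for every $n$, and in particular $f(\y)=f(\y'')$.

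Next I would apply Lemma~\ref{lemma: doubjump} at $\x^*$ to obtain the family $\{\y_r : r\in\R\}$. For any $\z\ge(\x^*)''$, property~(3) supplies (at least) two distinct indices $r_1,r_2\in C_{\z}$ with $\y_{r_1}''=\y_{r_2}''=\z$; each $\y_{r_i}>\x^*$, so
\[
f(\y_{r_1}) = f(\y_{r_1}'') = f(\z) = f(\y_{r_2}'') = f(\y_{r_2}).
\]
The common value $f(\z)$ is therefore a real that is $\le_T \y_{r_1}$ and $\le_T \y_{r_2}$, so by property~(2) it satisfies $f(\z)\le\x^*$. Consequently, on the cone $u_{(\x^*)''}$ the range of $f$ lies inside the countable set of reals Turing-reducible to $\x^*$, as required.

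The step that requires the most care, and where I expect the main subtlety to sit, is the opening move: converting the negative statement of Lemma~\ref{prop: no a increasing} into the positive invariance $f(\y)=f(\y')$ via $\TD$, and then picking $\x^*$ so that $\y'$ remains in the cone, which is what lets us bootstrap once more to $f(\y)=f(\y'')$. Once this is secured, the double-jump lemma does the combinatorial work almost automatically: property~(3) produces two distinct branches $r_1,r_2$ with the same double jump (hence the same $f$-value), and property~(2) converts that coincidence into the Turing bound $f(\z)\le\x^*$ without further effort.
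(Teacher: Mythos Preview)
Your proof is correct and follows essentially the paper's approach: use Lemma~\ref{prop: no a increasing} with the jump to get $f(\y)=f(\y')=f(\y'')$ on a cone, then pick two branches $\y_{r_1},\y_{r_2}$ from Lemma~\ref{lemma: doubjump} with the same double jump $\z$ and use property~(2) to push $f(\z)$ below the base degree; the paper runs this by contradiction (assuming the range is uncountable on every cone and choosing $r_0$ with $f(\y_{r_0}'')\not\le\x$), whereas you give the cleaner direct version that $f(\z)\le\x^*$ on $u_{(\x^*)''}$. One small point to tighten: property~(2) of Lemma~\ref{lemma: doubjump} requires \emph{strict} inequality $f(\z)<\y_{r_i}$, not just $\le$, so either note that $C_\z$ is infinite and the degree of $f(\z)$ can equal at most one $\y_r$, or argue via $\TD$ (as the paper implicitly does) that $f(\y)\notin\y$ on a further cone.
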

\begin{proof}
Suppose not. 

By Lemma \ref{prop: no a increasing}  and applying $\x\mapsto \x'$ to $\Phi$, we get that $f(\mathbf{y})=f(\mathbf{y}')$ over an upper cone. In particular,  $f(\mathbf{y})\notin \mathbf{y}$.
 
So we can find an upper cone $\ux$ so that  $f(\y')=f(\mathbf{y})< \mathbf{y}$ for any $\y\in\ux$. We fix a set $\{\y_r\mid r\in \R\}$ as in Lemma \ref{lemma: doubjump}. By applying the assumption to the upper cone $u_{\x''}$, we may pick a  real $r_0$  with $\y_{r_0}''\geq \x''$ so that   $f(\mathbf{y}_{r_0}'')\not\leq\mathbf{x}$. By (3) of Lemma \ref{lemma: doubjump}, there is another real $r_1\neq r_0$ so that $\y_{r_0}''=\y_{r_1}''$ and so $f(\mathbf{y}_{r_0}'')=f(\mathbf{y}_{r_1}'')$. 

However, $f(\mathbf{y}_{r_i}'')=f(\mathbf{y}_{r_i}')=f(\mathbf{y}_{r_i})\leq \mathbf{y}_{r_i}$ for both $i\leq 1$. By (2) of Lemma \ref{lemma: doubjump},  $f(\mathbf{y}_{r_i}'')\leq \x$ for both $i\leq 1$, contradicting the selection of $\y_{r_0}$.  
\end{proof}

\begin{lemma}\label{proposition: countable}
Suppose that $f:\D\to \R$ is a function. The range of $f$ is at most countable over an upper cone.
\end{lemma}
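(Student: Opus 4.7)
The plan is to proceed by a $\TD$ dichotomy on the set $\{\mathbf{y}\mid f(\mathbf{y})\leqT\mathbf{y}\}$.  If this set contains an upper cone, Lemma~\ref{lemma: down} applies and gives the desired conclusion.  So the only remaining situation is that $f(\mathbf{y})\not\leqT\mathbf{y}$ holds on some upper cone $u_{\mathbf{x}}$, and the task reduces to deriving a contradiction there.

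Write $\bar f(\mathbf{y})$ for the Turing degree of the real $f(\mathbf{y})$, and introduce the auxiliary map $\Phi:\D\to\D$ given by $\Phi(\mathbf{y})=\mathbf{y}\oplus\bar f(\mathbf{y})$.  On $u_{\mathbf{x}}$ we have $\bar f(\mathbf{y})\not\leq\mathbf{y}$, hence $\Phi(\mathbf{y})>\mathbf{y}$ strictly; so $\Phi$ is almost increasing.  Lemma~\ref{prop: no a increasing} therefore forbids $f$ from being almost injective corresponding to $\Phi$, so $\{\mathbf{y}\mid f(\mathbf{y})\neq f(\Phi(\mathbf{y}))\}$ fails to contain an upper cone, and by $\TD$ its complement $\{\mathbf{y}\mid f(\mathbf{y})=f(\Phi(\mathbf{y}))\}$ contains an upper cone $u_{\mathbf{x}_1}$.

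The closing move is a short idempotence calculation on $u_{\mathbf{x}\oplus\mathbf{x}_1}$.  Since $f(\Phi(\mathbf{y}))=f(\mathbf{y})$ forces $\bar f(\Phi(\mathbf{y}))=\bar f(\mathbf{y})$, we get
\[
\Phi(\Phi(\mathbf{y}))=\Phi(\mathbf{y})\oplus\bar f(\Phi(\mathbf{y}))=\Phi(\mathbf{y})\oplus\bar f(\mathbf{y})=\Phi(\mathbf{y}),
\]
where the last equality uses $\Phi(\mathbf{y})\geq\bar f(\mathbf{y})$ by definition.  On the other hand $\Phi(\mathbf{y})$ itself lies in $u_{\mathbf{x}}$, so the strict-increase property of $\Phi$ applied at $\Phi(\mathbf{y})$ yields $\Phi(\Phi(\mathbf{y}))>\Phi(\mathbf{y})$, contradicting the idempotence.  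This rules out the second case, so $f(\mathbf{y})\leqT\mathbf{y}$ holds on an upper cone and Lemma~\ref{lemma: down} finishes the proof.  The only delicate point is selecting the auxiliary $\Phi$: it must be almost increasing so that Lemma~\ref{prop: no a increasing} delivers the identity $f=f\circ\Phi$ on a cone, and simultaneously be built from $f$ so that this identity forces $\Phi\circ\Phi=\Phi$, producing the clean collision with strict increase.
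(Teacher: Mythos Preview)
Your proof is correct and follows essentially the same line as the paper's: define $\Phi(\mathbf{y})=\mathbf{y}\oplus\bar f(\mathbf{y})$, note it is almost increasing on the cone where $f(\mathbf{y})\not\leqT\mathbf{y}$, apply Lemma~\ref{prop: no a increasing} to get $f(\mathbf{y})=f(\Phi(\mathbf{y}))$ on a cone, and derive the contradiction. The only cosmetic difference is the final step: the paper observes directly that $f(\Phi(\mathbf{y}))=f(\mathbf{y})\leqT\Phi(\mathbf{y})$ violates the standing hypothesis $f(\mathbf{z})\not\leqT\mathbf{z}$ at $\mathbf{z}=\Phi(\mathbf{y})$, whereas you rephrase the same collision as $\Phi\circ\Phi=\Phi$ versus $\Phi\circ\Phi>\Phi$---these are equivalent by the definition of $\Phi$.
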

\begin{proof}
Suppose not.  Then by Lemma \ref{lemma: down}, we may assume that over an upper cone $\ux$, 
 $$f(\mathbf{y})\nleq \y.\eqno(*)_{\y}$$ 
 Let $$\Phi(\mathbf{y})=\{z\mid \exists y_0\in \y(z\equiv_T y_0\oplus f(\y)) \}.$$ Then $\Phi$ is a well defined almost increasing function over the upper cone $\ux$. Then by Lemma \ref{prop: no a increasing},   $f(\mathbf{y})= f(\Phi(\y))$ over an upper cone $u_{\x_1}\subseteq \ux$. Then over the upper cone, $$ f(\Phi(\y))=f(\mathbf{y}) \leq \Phi(\y).$$
This contradicts  $(*)_{\Phi(\y)}$.
\end{proof}

Now we are ready to prove the main theorem \ref{main}.

\begin{proof}(of Theorem \ref{main}.)
Suppose that $\{A_n\}_{n\in \mathbb{N}}$ is a countable family of nonempty sets of reals. Without loss of generality, we may assume that $A_n$ is Turing upward closed. Furthermore, we also may assume that $A_{n+1 } \subset A_{n}$ for every $n$ (reset $A_{n}$ to be $\bigcap_{k\leq n}A_k$ if necessary). 

For a contradiction, we  assume that for any real $y$, there is some $n$ (and so there are infinitely many $n$'s) so that there is  no real in $A_n$ Turing below $y$.  Define $B_n=A_n\setminus A_{n+1}$ for every $n$. Note that $B_n$ is nonempty and disjoint from an upper cone for every $n$.

Define a function $f:\D\to \R$\footnote{Here we identify a real as a subset of natural numbers.} so that $$f(\mathbf{y})=\{n\mid \exists \mathbf{z}\geq \mathbf{y}(\mathbf{z}\in B_n)\}.$$ By the property of $B_n$'s,  for every $\y$, $f(\y)(n)=1$ for infinitely many $n$'s.     By Lemma \ref{proposition: countable}, $f$ is countable over an upper cone of Turing degrees. Let $\{a_i\}_{i\in \omega}$ be an enumeration of the range of $f$ over the upper cone. Note that for every $i$,   $a_i(n)=1$ for infinitely many $n$'s  . Then there is some $a\subseteq \omega$ so that $a\cap a_i\neq \emptyset$ and $a_i\setminus a\neq \emptyset$ for every $i$. Let $C_0=\bigcup_{n\in a}B_n$ and $C_1=\bigcup_{n\not\in a}B_n $. Then $C_0\cap C_1=\emptyset$ and $C_0\cup C_1=\bigcup_{n\in \mathbb{N}} B_n$. So either $C_0$ or $C_1$ contains an upper cone of Turing degrees.\footnote{Actually ranges of the $f$ over upper cones   generate an ultrafilter as observed by Larson \cite{Lartd}.}

If $C_0$ contains an upper cone $\ux$ of Turing degrees, then let $\mathbf{y}\in \ux$. Then for any $\mathbf{y}_0\geq\y$, $\mathbf{y}_0\not\in C_1$ and so  $f(\mathbf{y})\subseteq a$. But $a_i\not\subseteq a$ for every $i$. Thus $f(\mathbf{y})\neq a_i$ for every $i$. 

If $C_1$ contains an upper cone $\ux$ of Turing degrees, then let $\mathbf{y}\in \ux$.  Then for any $\mathbf{y}_0\geq\y$, $\mathbf{y}_0\not\in C_0$ and   so  $f(\mathbf{y})\cap a=\emptyset$. But $a_i\cap a\neq \emptyset$ for every $i$. Thus $f(\mathbf{y})\neq a_i$ for every $i$. 

So in either case, there is some  $\y$ so that $f(\y)$ is not in the range of $f$, which is absurd.
\end{proof}

\bibliographystyle{plain}

\begin{thebibliography}{1}

\bibitem{Jech03}
Thomas Jech.
\newblock {\em Set Theory}.
\newblock Springer Monographs in Mathematics. Springer-Verlag, Berlin, 2003.

\bibitem{Lar}
Paul Larson.
\newblock {\em Extensions of the Axiom of Determinacy, draft, September 30,
  2020}.
\newblock http://www.users.miamioh.edu/larsonpb/adplusbook-public.pdf.

\bibitem{Lartd}
Paul Larson.
\newblock Turing determinacy, countable choice and ultrafilters, online draft,
  http://www.users.miamioh.edu/larsonpb/tdccr.pdf, June 8, 2020.

\bibitem{Ler83}
Manuel Lerman.
\newblock {\em Degrees of unsolvability}.
\newblock Perspectives in Mathematical Logic. Springer-Verlag, Berlin, 1983.
\newblock Local and global theory.

\bibitem{Martin68}
Donald~A. Martin.
\newblock The axiom of determinateness and reduction principles in the
  analytical hierarchy.
\newblock {\em Bull. Amer. Math. Soc.}, 74:687--689, 1968.

\bibitem{MYC64}
Jan Mycielski.
\newblock On the axiom of determinateness.
\newblock {\em Fund. Math.}, 53:205--224, 1963/64.

\end{thebibliography}

\end{document}